\newtheorem{teor}{Theorem}
\newtheorem{lemma}[teor]{Lemma}
\theoremstyle{definition}
\newtheorem{Levi-Civita}[teor]{On the Levi-Civita transform}
\newtheorem{rem}[teor]{Remark}}
\newcommand{\R}{\mathbb{R}}
\def\beq{\begin{equation}}
\def\eeq{\end{equation}}
\def\pa{\partial}
\def\t{\theta}
\def\d{\delta}
\def\wt{\widetilde}
\def\wh{\widehat}
\def\f{\varphi}
\def\l{\lambda}
\def\n{\nabla}
\def\eps{\varepsilon}
\title{Addendum to: Symbolic dynamics for the $N$-centre problem at negative energies}
\author[Nicola Soave and Susanna Terracini]{}
\subjclass{Primary: 70F10, 37N05; Secondary: 70F15, 37J30.}
\keywords{$N$-centre problem, chaotic motions, symbolic dynamics, Levi-Civita regularization.}
 \email{n.soave@campus.unimib.it }
 \email{susanna.terracini@unimib.it}
\thanks{N. Soave and S. Terracini were
partially supported by the PRIN2009 grant ``Critical Point Theory and
Perturbative Methods for Nonlinear Differential
Equations''.}
\begin{document}
\maketitle
\centerline{\scshape Nicola Soave}
\medskip
{\footnotesize
 \centerline{Universit\`a di Milano Bicocca - Dipartimento di Ma\-t\-ema\-ti\-ca e Applicazioni}
   \centerline{Via Cozzi 53}
   \centerline{20125 Milano, Italy}
} 

\medskip

\centerline{\scshape Susanna Terracini}
\medskip
{\footnotesize
 \centerline{Universit\`a di Milano Bicocca - Dipartimento di Ma\-t\-ema\-ti\-ca e Applicazioni}
   \centerline{Via Cozzi 53}
   \centerline{20125 Milano, Italy}
} 

\begin{abstract}
This paper aims at completing and clarifying a delicate step in the proof of Theorem 5.3 of our paper \cite{ST}, where it was used the differentiability of a function $F$, which a priori can appear not necessarily differentiable.  
\end{abstract}

\section*{Introduction}
In Step 2) of the proof Theorem 5.3 of our paper \cite{ST}, we stated that the function $F$ has partial derivatives. We refer here to the notation of our  paper.
Actually this is not immediately granted, due to the lack of uniqueness of inner minimizer of the Maupertuis' functional $M$; however the quoted Theorem still holds true, and a posteriori also the differentiability of the function $F$. 
We can prove it with the introduction of a family of auxiliary smooth functions which are strictly related to $F$.

\section*{Addendum}

We refer to Step 2) in the proof of Theorem 5.3. of our paper \cite{ST}, which is the main reference for this paper. Let $k \in \{0,\ldots,2n-1\}$. To fix the ideas, let $k=2j+1$ for some $j \in \{0,\ldots,n-1\}$. We introduce a  neighbourhood $U_{2j+1}$ of the point $\bar{p}_{2j+1}$ which is strongly convex with respect to the Jacobi metric. Let us choose $t_* \in (0,T_{2j+1})$ such that 
\[
\wt{p}_{2j+1}:=y_{2j+1}(t_*) \in  U_{2j+1},\quad |\wt{p}_{2j+1}|<R, \quad y\left([0,t_*]\right) \subset \left(B_R(0) \setminus B_{R/2}(0)\right);
\]
in this way, in $[0,t_*]$, the function $y_{2j+1}$ does not interact with the singularities of the potential. There exists a unique minimal geodesic $\wh{y}(\cdot;\bar{p}_{2j+1},\wt{p}_{2j+1};\eps)$ for the Jacobi metric, parametrized with respect to the arc length, connecting $p_{2j+1}$ and $\wt{p}_{2j+1}$ and lying in $U_{2j+1}$, which depends smoothly on its ends. We know that $y_{2j+1}$ is a minimizer of the length $L$ connecting $p_{2j+1}$ and $p_{2j+2}$, therefore (Proposition 4.8) this geodesic has to be a reparametrization of $y_{2j+1}$. Note that if $p_{2j+1} \in \bar{U}_{2j+1}$, then there exists a unique minimal geodesics $\wh{y}(\cdot;p_{2j+1},\wt{p}_{2j+1};\eps)$ for the Jacobi metric, parametrized with respect to the arc length, which connects $p_{2j+1}$ and $\wt{p}_{2j+1}$. We will consider the reparametrization $\wt{y}(\cdot\,;p_{2j+1},\wt{p}_{2j+1};\eps)$ of $\wh{y}(\cdot\,;p_{2j+1},\wt{p}_{2j+1};\eps)$ such that
\[
\begin{cases}
\ddot{\wt{y}}(t)=\n V_\eps(\wt{y}(t)) \\
\frac{1}{2}|\wt{y}(t)|^2-V_\eps(\wt{y}(t))=-1,
\end{cases}
\]
denoting by $[0,T(p_{2j+1},\wt{p}_{2j+1})]$ its domain. Due to the minimality of 
$\wh{y}(\cdot\,;p_{2j+1},\wt{p}_{2j+1};\eps)$ for $L$, such a reparametrization exists, see Theorem 4.5. In this way
\beq\label{oss 1}
\wt{y}(\cdot\,;\bar{p}_{2j+1},\wt{p}_{2j+1};\eps)\equiv y_{P_{k_{j+1}}}(\cdot\,;\bar{p}_{2j+1},\wt{p}_{2j+1};\eps)|_{[0,T(\bar{p}_{2j+1},\wt{p}_{2j+1})]}.
\eeq
Let us denote
\[
D_{2j+1}:=\{p_{2j+1} \in \left(\pa B_R(0) \cap \bar{U}\right): |\bar{p}_{2j}-p_{2j+1}| \leq \d\}
\]
and let $D_{2j+1}^{\circ}$ denote its interior. We define $G_{2j+1}:D_{2j+1} \to \R$ as
\begin{multline*}
G_{2j+1}(p_{2j+1}):=L\left([0,T(p_{2j+1})]; y_{\text{ext}}(\cdot\,;\bar{p}_{2j},p_{2j+1};\eps)\right)\\
+ L\left([0,T(p_{2j+1},\wt{p}_{2j+1})]; \wt{y}(\cdot\,;p_{2j+1},\wt{p}_{2j+1};\eps)\right),
\end{multline*}
where we write (and we will adopt this notation from now on) $T(p_{2j+1})$ for \\$T_{\text{ext}}(\bar{p}_{2j},p_{2j+1};\eps)$. 
Of course, with minor changes we can also define a function $G_{2j}$, for every $j \in \{0,\ldots,n-1\}$. 
Note that $G_k$ is continuous (for every $k$), for it is the sum of continuous terms with respect to $p_k$. As a consequence, $G_k$ has a minimum. The following statement can be easily proven.

\begin{lemma}\label{localizzazione minimi F}
If $(\bar{p}_0,\ldots,\bar{p}_{2n})$ is a minimizer for $F$, then $\bar{p}_k$ is a minimizer for $G_k$.
\end{lemma}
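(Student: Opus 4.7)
The approach is a classical exchange argument: I would reason by contradiction, assuming that for some $k=2j+1$ there exists $q\in D_{2j+1}$ with $G_{2j+1}(q)<G_{2j+1}(\bar{p}_{2j+1})$ (the case $k=2j$ is analogous), and deduce the existence of a configuration strictly better than $(\bar{p}_0,\ldots,\bar{p}_{2n})$ for $F$. The heuristic is that $G_{2j+1}$ captures exactly those pieces of $F$ which depend on the $(2j+1)$-st entry, once we split the relevant inner arc at the auxiliary point $\wt{p}_{2j+1}$.

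Concretely, I would first decompose $F(\bar{p}_0,\ldots,\bar{p}_{2n})$ by cutting the internal minimizer $y_{P_{k_{j+1}}}$ at time $t_*$. By identity (\ref{oss 1}) its restriction to $[0,t_*]$ coincides, up to reparametrization and hence with identical Jacobi length, with $\wt{y}(\cdot\,;\bar{p}_{2j+1},\wt{p}_{2j+1};\eps)$. Writing $R$ for the sum of the length of the tail $y_{P_{k_{j+1}}}|_{[t_*,T_{2j+1}]}$ and of all remaining blocks of $F$ (i.e.\ the ones that do not involve $\bar{p}_{2j+1}$), this yields
\[
F(\bar{p}_0,\ldots,\bar{p}_{2n})=G_{2j+1}(\bar{p}_{2j+1})+R.
\]

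Next, I would estimate $F$ at the modified configuration $(\bar{p}_0,\ldots,\bar{p}_{2j},q,\bar{p}_{2j+2},\ldots,\bar{p}_{2n})$ from above by exhibiting a competitor for the $(2j+1)$-st inner minimization. The natural candidate is the concatenation of $\wt{y}(\cdot\,;q,\wt{p}_{2j+1};\eps)$ with the unchanged tail $y_{P_{k_{j+1}}}|_{[t_*,T_{2j+1}]}$; this is an admissible path from $q$ to $\bar{p}_{2j+2}$ (with at most a corner at $\wt{p}_{2j+1}$, which is allowed in the Sobolev class over which the Maupertuis functional is minimized). Since the new external block from $\bar{p}_{2j}$ to $q$ coincides with the first summand of $G_{2j+1}(q)$ and all other blocks of $F$ are untouched, one obtains
\[
F(\bar{p}_0,\ldots,q,\ldots,\bar{p}_{2n})\le G_{2j+1}(q)+R<G_{2j+1}(\bar{p}_{2j+1})+R=F(\bar{p}_0,\ldots,\bar{p}_{2n}),
\]
contradicting the minimality of $(\bar{p}_0,\ldots,\bar{p}_{2n})$ for $F$.

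The only delicate point I foresee is ensuring that the auxiliary geodesic $\wh{y}(\cdot\,;q,\wt{p}_{2j+1};\eps)$ and its Jacobi reparametrization $\wt{y}$ are actually available for every competitor $q\in D_{2j+1}$; this is granted by the setup, since the strong Jacobi-convexity of $U_{2j+1}$ provides existence, uniqueness and smooth dependence of the geodesic whenever $q\in\bar{U}_{2j+1}$, a condition ensured (up to shrinking $\delta$) by the very definition of $D_{2j+1}$. Once this is granted, the argument reduces to bookkeeping of lengths built on the key identity (\ref{oss 1}), as sketched above.
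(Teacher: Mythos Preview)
Your exchange argument is correct and is precisely the reasoning the paper has in mind: the authors declare the lemma ``can be easily proven'' and give no proof, and the identity \eqref{oss 1} together with the additivity of the Jacobi length reduces the statement to the bookkeeping you carry out. The one detail worth recording is that the concatenated competitor is admissible for the inner variational problem in the class $P_{k_{j+1}}$, which follows because $\wt{y}(\cdot\,;q,\wt{p}_{2j+1};\eps)$ stays in the strongly convex neighbourhood $U_{2j+1}$ away from the centres, so the partition constraint is entirely inherited from the unchanged tail.
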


Contrary to $F$, $G_k$ is differentiable for every $k$: let's think at $k=2j+1$; $L\left([0,T(p_{2j+1})]; y_{\text{ext}}(\cdot\,;\bar{p}_{2j},p_{2j+1};\eps)\right)$ depends smoothly on $p_{2j+1}$ for the differentiable dependence of outer solutions with respect to the ends, and the length $L\left([0,T(p_{2j+1},\wt{p}_{2j})]; \wt{y}(\cdot\,;p_{2j+1},\wt{p};\eps)\right)$ depends smoothly on $p_{2j+1}$ for the differentiable dependence of minimal geodesics in a strongly convex neighbourhood with respect to the ends. Therefore the minimality of $\bar{p}_{2j+1}$ implies that  
\[
\bar{p}_{2j+1} \in D_{2j+1}^{\circ} \quad \Rightarrow \quad \frac{\pa G_{2j+1}}{\pa p_{2j+1}}(\bar{p}_{2j+1})=0;
\]
Next we show that, if $\eps$ is small enough, the minimizer $\bar{p}_k$ lies in the interior $D_k^{\circ}$ for every $k$. Moreover, 
and that the stationarity condition for $G_k$ provide smoothness of the functions 
\[
\sigma_{2j}(t):= \begin{cases} 
y_{P_{k_{j-1}}}(t;\bar{p}_{2j-1},\bar{p}_{2j};\eps) \qquad \text{if }t \in [0,T(\wt{p}_{2j},\bar{p}_{2j})] \\
y_{\text{ext}}(t-T(\wt{p}_{2j},\bar{p}_{2j});\bar{p}_{2j},\bar{p}_{2j+1};\eps) \\
\text{if } t \in [T(\wt{p}_{2j},\bar{p}_{2j}),T(\wt{p}_{2j},\bar{p}_{2j})+T(\bar{p}_{2j+1})]
\end{cases}
\]
and 
\[
\sigma_{2j+1}(t):= \begin{cases} y_{\text{ext}}(t;\bar{p}_{2j},\bar{p}_{2j+1};\eps) \qquad t \in [0,T(\bar{p}_{2j+1})]\\
y_{P_{k_{j+1}}}(t-T(\bar{p}_{2j+1});\bar{p}_{2j},\bar{p}_{2j+1};\eps) \\
\text{if } t \in [T(\bar{p}_{2j+1}),T(\bar{p}_{2j+1})+T(\bar{p}_{2j+1},\wt{p}_{2j+1})].
\end{cases}
\]
Observing that $\sigma_k$ is (up to a time translation) the restriction of $\gamma_{(\bar{p}_0, \ldots,\bar{p}_{2n})}$ on a neighbourhood of the junction time 
$\mathfrak{T}_{k-1}$, we obtain $\mathcal{C}^1$ regularity for $\gamma_{(\bar{p}_0, \ldots,\bar{p}_{2n})}$ in a neighbourhood of the set of the junction times. 
With this, it won't be difficult to conclude the proof of Theorem 5.3. 
First of all, we can adapt the computations of the partial derivatives developed in Step 3) of the quoted paper with minor changes, obtaining 
\begin{lemma}\label{calcolo delle derivate parziali}
For every $p_{2j} \in D_{2j}$ and for every $\f \in T_{p_{2j}}(B_R(0))$ we have
\[
\frac{\pa G_{2j}}{\pa p_{2j}}(p_{2j})[\f] = \frac{1}{\sqrt{2}}\langle \dot{\wt{y}}(T(\wt{p}_{2j},p_{2j});\wt{p}_{2j},p_{2j};\eps) - \dot{y}_{\text{ext}}(0;p_{2j},\bar{p}_{2j+1};\eps), \f \rangle .
\]
For every $p_{2j+1} \in D_{2j+1}$ and for every $\f \in T_{p_{2j+1}}(B_R(0))$ we have
\[
\frac{\pa G_{2j+1}}{\pa p_{2j+1}}(p_{2j+1})[\f] = \frac{1}{\sqrt{2}}\langle \dot{y}_{\text{ext}}(T(p_{2j+1});\bar{p}_{2j},p_{2j+1};\eps) -\dot{\wt{y}}(0;p_{2j+1},\wt{p}_{2j+1};\eps), \f \rangle .
\]
\end{lemma}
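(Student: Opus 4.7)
The plan is to compute the two partial derivatives by applying, summand by summand, the classical first variation formula for the length functional $L$ at a variable endpoint, and then combining the two resulting boundary contributions at the junction point. The argument is essentially the one carried out in Step 3 of \cite{ST}, which the authors already signal can be transferred with minor changes.

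\textbf{First I would} focus on $G_{2j+1}$ and record that each of its two summands is a smooth function of $p_{2j+1}$. The first summand involves the outer solution $y_{\text{ext}}(\cdot\,;\bar{p}_{2j},p_{2j+1};\eps)$, which satisfies the Euler--Lagrange equation associated with $L$ in the outer region and depends smoothly on its endpoints; the second summand involves the reparametrized minimal geodesic $\wt{y}(\cdot\,;p_{2j+1},\wt{p}_{2j+1};\eps)$ in the strongly convex set $U_{2j+1}$, which is a Jacobi geodesic and also depends smoothly on its endpoints. Both facts have already been invoked in the preceding discussion.

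\textbf{Then I would} apply the first variation of length to each summand separately. Because each underlying curve is a critical point of $L$ with endpoints held fixed, integration by parts eliminates the interior contribution and leaves only a boundary term at the endpoint being varied. For a curve $\gamma$ reparametrized so that $\frac{1}{2}|\dot\gamma|^2 - V_\eps(\gamma) = -1$, the boundary contribution at the terminal endpoint reads $+\frac{1}{\sqrt{2}}\langle \dot\gamma, \f\rangle$, and at the initial endpoint it reads $-\frac{1}{\sqrt{2}}\langle \dot\gamma, \f\rangle$; the factor $1/\sqrt{2}$ reflects the normalization of $L$ adopted in \cite{ST} together with the energy-preserving parametrization of $\gamma$, via the relation $|\dot\gamma|^2 = 2(V_\eps(\gamma)-1)$. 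For $G_{2j+1}$ the first summand has $p_{2j+1}$ as terminal endpoint and the second as initial endpoint, so summing the two contributions produces exactly the formula stated in the claim. The case of $G_{2j}$ is treated analogously, with the roles of the outer solution and the Jacobi geodesic exchanged between the two summands, which accounts for the sign pattern in the first assertion.

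\textbf{The only point requiring real care} is bookkeeping: tracking signs at initial versus terminal endpoints and the normalization factor $1/\sqrt{2}$, and observing that the $p_k$-dependence of the upper limits $T(p_k)$ and $T(p_k,\wt{p}_k)$ produces no additional boundary terms, by the reparametrization invariance of $L$ viewed as a line integral. With these observations in place, the conclusion follows immediately from the first variation formula, exactly as in Step 3 of \cite{ST}.
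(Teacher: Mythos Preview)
Your proposal is correct and follows exactly the approach the paper indicates: the authors state that the computations in Step~3 of \cite{ST} can be adapted with minor changes, and your application of the first variation formula for $L$ to each summand, keeping only the boundary terms at the moving endpoint and tracking the $1/\sqrt{2}$ normalization, is precisely that adaptation. Your remark that the $p_k$-dependence of the time endpoints contributes nothing, by reparametrization invariance of $L$, is the one bookkeeping point worth making explicit, and you handle it correctly.
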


The next Lemma replaces Step 4) of the proof of Theorem 5.3: its role is to prove that the minimizer falls naturally in the interior of the constraint $D_{k}$.

\begin{lemma}\label{minimi interni}
There exists $\bar{\eps}>0$ such that for every $\eps \in (0,\bar{\eps})$
\[
\text{$\bar{p}_k$ minimizes $G_k$} \Rightarrow \bar{p}_k \in D_k^\circ \qquad \forall k.
\]
The value $\bar{\eps}$ does not depend neither on $n$ nor on the sequence of partitions $(\mathcal{P}_{k_1},\ldots,\mathcal{P}_{k_n}) \in \mathcal{P}^n$.
\end{lemma}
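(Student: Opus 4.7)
The plan is to argue by contradiction and compactness, following the same scheme as the original Step 4) but applied to the differentiable function $G_k$. Suppose the conclusion fails: there exist $\eps_m\downarrow 0$, integers $n_m$, partitions, and minimizers $(\bar p_0^{(m)},\dots,\bar p_{2n_m}^{(m)})$ of $F$ such that, for some index $k_m$, the point $\bar p_{k_m}^{(m)}$ lies on $\pa D_{k_m}$. By Lemma~\ref{localizzazione minimi F} this point also minimizes $G_{k_m}$. Specializing to $k_m=2j_m+1$, the boundary $\pa D_{2j_m+1}$ decomposes into a portion coming from the $\bar U$-constraint, which is ruled out by the setup of Theorem 5.3, and the active portion $\{p:\,|\bar p_{2j_m}^{(m)}-p|=\d\}$, on which I would concentrate.

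At such a boundary minimizer, the first-order optimality condition on the submanifold $\pa B_R(0)$, combined with Lemma~\ref{calcolo delle derivate parziali}, produces a Karush-Kuhn-Tucker multiplier $\lambda_m\ge 0$ with
\[
\bigl[\dot y_{\text{ext}}(T(\bar p_{2j_m+1}^{(m)});\bar p_{2j_m}^{(m)},\bar p_{2j_m+1}^{(m)};\eps_m)-\dot{\wt y}(0;\bar p_{2j_m+1}^{(m)},\wt p_{2j_m+1}^{(m)};\eps_m)\bigr]_{\rm tang}=\lambda_m\bigl(\bar p_{2j_m}^{(m)}-\bar p_{2j_m+1}^{(m)}\bigr)_{\rm tang},
\]
where $[\cdot]_{\rm tang}$ denotes projection onto $T_{\bar p_{2j_m+1}^{(m)}}\pa B_R(0)$. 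In words, the tangential component of the velocity mismatch at the junction $\bar p_{2j_m+1}^{(m)}$ is forced to point along the sphere toward $\bar p_{2j_m}^{(m)}$, a condition of definite sign.

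I would then pass to the limit $\eps_m\to 0$. The three relevant points $\bar p_{2j_m}^{(m)},\bar p_{2j_m+1}^{(m)},\wt p_{2j_m+1}^{(m)}$ all live in the fixed compact annulus $\bar B_R(0)\setminus B_{R/2}(0)$, on which the regularized potentials $V_{\eps_m}$ converge smoothly to the unperturbed $V_0$. Using the $C^1$-dependence of outer solutions on their endpoints and of strongly convex Jacobi geodesics on their endpoints, I can extract a subsequence along which $(\bar p_{2j_m}^{(m)},\bar p_{2j_m+1}^{(m)},\wt p_{2j_m+1}^{(m)})\to(p_*,q_*,w_*)$ with $|p_*-q_*|=\d$ and $w_*$ at a uniform positive distance from $\pa B_R(0)$, and such that the corresponding arcs converge in $C^1$ to limit trajectories of energy $-1$. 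A direct geometric computation for these limit arcs should then show that, provided $\d$ is smaller than a constant depending only on $R$, the tangential part of the velocity mismatch \emph{cannot} align with $p_*-q_*$; this contradicts the displayed KKT relation and produces the desired $\bar\eps$.

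The main obstacle is the uniformity of $\bar\eps$ with respect to $n$ and the partition $(\mathcal P_{k_1},\dots,\mathcal P_{k_n})$: these combinatorial data are unbounded, so they cannot be fixed before letting $\eps\to 0$. The obstruction is overcome by observing that the entire contradiction is \emph{local} at the junction $\bar p_{k}^{(m)}$: both the KKT relation and the limit analysis involve only the three points $\bar p_{2j_m}^{(m)},\bar p_{2j_m+1}^{(m)},\wt p_{2j_m+1}^{(m)}$, all of which remain in a fixed compact set, so no dependence on $n$ or on the partition enters into the final compactness extraction or into the geometric contradiction.
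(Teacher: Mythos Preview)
Your overall strategy---contradiction via the first-order condition for $G_k$ at a boundary minimizer, exploiting the tangential velocity mismatch between the outer and inner arcs---is the same as the paper's. However, there is a genuine gap at the decisive step.

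You treat $\wt y(\cdot\,;\bar p_{2j_m+1}^{(m)},\wt p_{2j_m+1}^{(m)};\eps_m)$ merely as the unique Jacobi geodesic in a strongly convex neighbourhood and pass to a limit arc from $q_*$ to some $w_*$. Nothing in that description forces the limit arc to be nearly radial: a priori $w_*$ could be any point of the annulus, and the initial tangential velocity of the limit geodesic could be comparable to that of the outer arc. The fact you never invoke is that $\wt y$ is, by construction (equation \eqref{oss 1}), the restriction of the \emph{full inner minimizer} $y_{P_{k}}$, which must enter $B_\eps(0)$ and therefore converges, as $\eps\to 0$, to a radial collision trajectory of the Kepler problem (this is where Proposition~4.20 of \cite{ST} is used). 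Without this input your ``direct geometric computation for these limit arcs'' cannot be carried out, and the extra hypothesis you slip in (``provided $\d$ is smaller than a constant depending only on $R$'') is not part of the statement and does not by itself rescue the argument.

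The paper's proof avoids the sequence extraction altogether and is more explicit. For $\eps$ small it establishes two uniform bounds: a lower bound $C_1>0$ on $|\dot\t_{\text{ext}}|$ at arrival for every outer arc with endpoints at distance $\d$ (from rotational invariance of the $\eps=0$ Kepler problem and continuous dependence), and an upper bound $C_1/2$ on $|\dot{\wt\t}(0)|$ for the inner piece, obtained precisely from the collision-limit argument above. Inserting these into Lemma~\ref{calcolo delle derivate parziali} with the tangential variation $\f$ pointing along $\pa B_R(0)$ toward $\bar p_{k-1}$ gives $\frac{\pa G_k}{\pa p_k}(\bar p_k)[\f]<0$ directly, contradicting minimality. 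This angular-velocity comparison is exactly what your ``direct geometric computation'' would have to be, and it requires the collision structure of the inner arc, not just the strong convexity of the neighbourhood.
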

\begin{proof}
Assume that there exists $k \in \{0,\ldots, 2n\}$ such that 
\[
\begin{cases}
|\bar{p}_k-\bar{p}_{k+1}|=\d & \text{if $k$ is even}\\
|\bar{p}_k-\bar{p}_{k-1}|=\d & \text{if $k$ is odd}.
\end{cases}
\]
To fix our minds, let $k=1$. We can produce an explicit variation of $\bar{p}_1$ such that $G_1$ decreases along this variation, in contradiction with the minimality of $\bar{p}_1$.
Let's write
\begin{align*}
& y_{\text{ext}}(t;p_0,p_1;\eps)= r_{\text{ext}}(t;p_0,p_1;\eps) \exp\{i \t_{\text{ext}}(t;p_0,p_1;\eps)\}, \\
& y_{P_{k_1}}(t;p_1,p_2;\eps)= r_{P_{k_1}}(t;p_1,p_2;\eps) \exp\{i \t_{P_{k_1}}(t;p_1,p_2;\eps)\}, \\
& \wt{y}(t;p_1,\wt{p}_1;\eps) = \wt{r}(t;p_1,\wt{p}_1;\eps) \exp\{i \wt{\t}(t;p_1,\wt{p}_1;\eps)\}.
\end{align*}
The first step consists in proving that there are $C_1>0$ and $\eps_4>0$ such that if $0<\eps<\eps_4$ then
\begin{multline}\label{mom ang grande}
|\dot{\t}_{\text{ext}}(T_{\text{ext}}(p_*,p_{**};\eps);p_*,p_{**};\eps)| \geq C_1 \qquad \text{for every} \\
(p_*,p_{**}) \in \{(p_*,p_{**}) \in (\pa B_R(0))^2: |p_*-p_{**}|=\d\}.
\end{multline}
This means that, if the distance between $(p_*,p_{**})$ is $\d$, for $\eps$ small enough the outer solution connecting these two points arrive in $p_{**}$ with an angular momentum which cannot be too small.
To show it, we observe that, since the unperturbed problem ($\eps=0$) is invariant under rotations, there is $C_2>0$ such that
\begin{multline*}
|\dot{\t}_{\text{ext}}(T_{\text{ext}}(p_*,p_{**};0);p_*,p_{**};0)|=C_2 \qquad \text{for every} \\
(p_*,p_{**}) \in \{(p_*,p_{**}) \in (\pa B_R(0))^2: |p_*-p_{**}|=\d\}.
\end{multline*}
Now, assume by contradiction that \eqref{mom ang grande} does not hold. Then there exist two sequences $(\l_n)$ and $(\eps_n)$ of positive numbers and a sequence of points $(p_*^n,p_{**}^n) \in \left(\pa B_R(0)\right)^2$, with $|p_*^n-p_{**}^n|=\d$ for every $n$, such that
\[
\l_n \to 0 \quad \eps_n \to 0 \quad |\dot{\t}_{\text{ext}}(T_{\text{ext}}(p_*^n,p_{**}^n;\eps_n);p_*^n,p_{**}^n;\eps_n)|<\l_n.
\]
Since the set $\{  (p_*,p_{**}) \in (\pa B_R(0))^2: |p_*-p_{**}|=\d\}$ is compact, up to a subsequence $(p_*^n,p_{**}^n)$ converges to a point $(\bar{p}_*,\bar{p}_{**})$, and by applying the continuous dependence of the outer solutions with respect to variations of the vector field and initial data we would obtain
\[
|\dot{\t}_{\text{ext}}(T_{\text{ext}}(\bar{p}_*,\bar{p}_{**};0);\bar{p}_*,\bar{p}_{**};0)|=0,
\]
a contradiction. This proves \eqref{mom ang grande}.
On the other hand, we can prove that each inner trajectory (for every $p_1$ and $p_2$ on $\pa B_R(0)$, for every $P_j \in \mathcal{P}$) starts with a small angular momentum, if $\eps$ is sufficiently small; to be precise 
\beq\label{momento angolare piccolo}
\forall \lambda > 0 \ \exists \eps_5 > 0: 0<\eps<\eps_5 \Rightarrow |\dot{\t}_{P_j}\left(0;p_1,p_2;\eps\right)|<\lambda,
\eeq
for every $p_1,p_2 \in \pa B_R(0)$, for every $P_j \in \mathcal{P}$. 
To show it, we define $S=S(p_1,p_2;\eps) \in \R^+$ by
\[
t \in (0,S) \Rightarrow \frac{R}{2}<|y_{P_j}(t;p_1,p_2;\eps)| <R \text{ and } |y_{P_j}(S;p_1,p_2;\eps)|=\frac{R}{2}.
\]
The energy integral makes this quantity uniformly bounded from below by a positive constant $C$, as function of $\eps$. Letting $\eps \to 0^+$ the centres collapse in the origin, so that for the angular momentum of $y_{P_{k_1}}(\cdot\,;p_1,p_2;\eps)$ it results
\[
\mathfrak{C}_{y_{P_{k_1}}(\cdot\,;p_1,p_2;\eps)}\left(t\right) = o(1)  \quad \text{for} \quad \eps \to 0^+,
\]
uniformly in $[0,C]$ (recall Proposition 4.20). This limit is uniform in $p_1$, $p_2$ and $P_{k_1}$: since the curve parametrized by $y_{P_{k_1}}(\cdot\,;p_1,p_2;\eps)$ has to pass inside the ball or radius $\eps$, the function $y_{P_{k_1}}(\cdot\,;p_1,p_2;\eps)$ uniformly converges in $[0,C]$, for $\eps \to 0$, to the same (up to a rotation) piece of collision solution of the Kepler's problem. This proves \eqref{momento angolare piccolo}. The choice $\lambda=C_1/2$ in \eqref{momento angolare piccolo} gives
\[
| \dot{\t}_{P_j}(0;p_1,p_2;\eps) | < \frac{C_1}{2} \qquad \text{if $0<\eps<\eps_5$},
\]
for every $p_1,p_2 \in \pa B_R(0)$, for every $P_j \in \mathcal{P}$. Recalling equation \eqref{oss 1}, we deduce that 
\beq\label{**52}
| \dot{\wt{\t}}(0;\bar{p}_1,\wt{p}_1;\eps)| < \frac{C_1}{2} \qquad \text{if $0<\eps<\eps_5$}.
\eeq
Assume now that $\bar{p}_0=R \exp\{i \bar{\t}_0\}$, $\bar{p}_1=R\exp\{i\bar{\t}_1\}$, with $\bar{\t}_0, \bar{\t}_1 \in [0,2\pi)$ and $\bar{\t}_0 < \bar{\t}_1$ (if $\bar{\t}_0 < \bar{\t}_1$ a very similar argument works). We consider a variation $\f \in T_{\bar{p}_1}(\pa B_R(0))$ of $\bar{p}_1$ directed towards $\bar{p}_0$ on $\pa B_R(0)$. Since $\bar{\t}_0<\bar{\t}_1$, this variation is a positive multiple of $-i\exp{\{i \bar{\t}_1\}}$. Collecting \eqref{mom ang grande}, \eqref{**52} and using Lemma \ref{calcolo delle derivate parziali}, for any $0<\eps<\min\{\eps_2,\eps_3,\eps_4,\eps_5\}=: \bar{\eps}$ we have that if $|\bar{p}_0-\bar{p}_1|=\delta$ then
\begin{multline*}
\frac{\pa G_1}{\pa p_1} (\bar{p}_1)[\f]\\
=\frac{CR}{\sqrt{2}}\left\langle  \left(\dot{\t}_{\text{ext}}\left(T_{\text{ext}}(\bar{p}_0,\bar{p}_1;\eps);\bar{p}_0,\bar{p}_1;\eps\right) - \dot{\wt{\t}}\left(0;\bar{p}_1,\wt{p}_1;\eps\right)\right) i e^{i \t_1}, -i e^{i \t_1}\right\rangle\\
 <\frac{CR}{\sqrt{2}}\left(\frac{C_1}{2}- C_1\right)<0,
\end{multline*}
against the minimality of $(\bar{p}_0,\ldots,\bar{p}_{2n})$. We point out that $\bar{\eps}$ does not depend neither on $n \in \mathbb{N}$ nor on $(P_{k_1},\ldots,P_{k_n}) \in \mathcal{P}^n$.
\end{proof}

As a consequence, we get the counterpart of Step 5) of the proof of Theorem 5.3:
\begin{lemma}\label{regolarita' dei minimi}
If $0<\eps< \bar{\eps}$, then each function $\sigma_{k}$ is $\mathcal{C}^1$.
\end{lemma}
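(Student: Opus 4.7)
The plan is to establish $\mathcal{C}^1$ regularity of each $\sigma_k$ by verifying that the two smooth pieces which compose it match not only in position (which is built into the definition) but also in velocity at the junction point $\bar{p}_k$. Each branch of $\sigma_k$ is by construction a classical solution of $\ddot{y}=\n V_\eps(y)$ on its own time interval, hence smooth in the interior; the problem thus reduces to a one-point matching of first derivatives at the junction.

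First I would invoke Lemma \ref{minimi interni}: for $\eps\in(0,\bar{\eps})$ the minimizer $\bar{p}_k$ lies in $D_k^\circ$, so stationarity gives $\pa G_k/\pa p_k(\bar{p}_k)=0$. Taking $k=2j+1$ and combining with Lemma \ref{calcolo delle derivate parziali}, one obtains
\[
\langle \dot{y}_{\text{ext}}(T(\bar{p}_{2j+1});\bar{p}_{2j},\bar{p}_{2j+1};\eps) - \dot{\wt{y}}(0;\bar{p}_{2j+1},\wt{p}_{2j+1};\eps),\, \f\rangle = 0
\]
for every $\f \in T_{\bar{p}_{2j+1}}(\pa B_R(0))$. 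Hence the difference of the two velocity vectors at the junction is orthogonal to $\pa B_R(0)$, i.e.\ purely radial.

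Next I would use that both branches satisfy the energy relation $\tfrac12|\dot y|^2 - V_\eps(y) = -1$ at $\bar{p}_{2j+1}$ (explicitly for $\wt{y}$ from its defining system, and analogously for the outer solution). Since the two velocities are evaluated at the same base point, they share the same Euclidean norm. Equal tangential components together with equal norms force the radial components to have equal absolute value.

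The main remaining obstacle is to rule out opposite signs of the two radial components; here a geometric argument specific to the construction does the job. The outer trajectory lives in $\overline{\R^2\setminus B_R(0)}$ and arrives at $\bar{p}_{2j+1}\in\pa B_R(0)$ at time $T(\bar{p}_{2j+1})$, so its radial velocity there is non-positive. Symmetrically, $\wt{y}$ departs from $\bar{p}_{2j+1}$ into $\overline{B_R(0)}$, so its radial velocity at $t=0$ is likewise non-positive. Same sign plus equal magnitude forces equal radial components, so the two velocity vectors coincide at the junction and $\sigma_{2j+1}$ is $\mathcal{C}^1$ there. The case $k=2j$ is identical upon interchanging the roles of inner and outer pieces, the two radial velocities then being non-negative. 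Since each branch is already smooth in its interior, $\sigma_k$ is globally $\mathcal{C}^1$.
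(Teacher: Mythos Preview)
Your proof is correct and follows precisely the approach the paper outlines (but does not spell out): interiority of the minimizer from Lemma~\ref{minimi interni} gives stationarity of $G_k$, Lemma~\ref{calcolo delle derivate parziali} forces the tangential components of the two velocities at the junction to agree, and then equal energy at the common base point together with the inward/outward geometry of the inner and outer pieces fixes the radial components. This is exactly what the addendum means by calling Lemma~\ref{regolarita' dei minimi} ``the counterpart of Step~5)'' and a consequence of the preceding lemmas.
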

The conclusion of the proof of Theorem 5.3, Step 6), remains the same.

\begin{rem}
We saw that the extremality condition for $(\bar{p}_0,\ldots,\bar{p}_{2n})$ implies that
\[
\dot{y}_{2j}(0)=\dot{y}_{2j+1}(T_{2j+1}) \quad \text{and}\quad \dot{y}_{2j+1}(0)=\dot{y}_{2j}(T_{2j}) \quad \forall j=0,\ldots,n-1.
\]
Therefore, for the uniqueness of the outer arcs and of the solutions of regular Cauchy problem, $y_{2j+1}$ is uniquely determined in $[0,t_0]$, where $t_0$ is the first collision time of $y_{2j+1}$; also, $y_{2j+1}$ is uniquely determined in $[t_1,T_{2j+1}]$, where $t_1$ is the last collision time of $y_{2j+1}$. Since every inner minimizer has at most one collision, if $y_{2j+1}$ connects $\bar{p}_{2j+1}$ and $\bar{p}_{2j+2}$, where $(\bar{p}_0,\ldots,\bar{p}_{2n})$ minimizes $F$, then it is uniquely determined. In particular, $F$ turns out to be differentiable with respect to the ends.\end{rem}

\end{document}